\newcommand\blfootnote[1]{%
  \begingroup
  \renewcommand\thefootnote{}\footnote{#1}%
  \addtocounter{footnote}{-1}%
  \endgroup
}
\theoremstyle{plain}                   
\newtheorem{theorem}{Theorem}
\newtheorem{lem}[theorem]{Lemma}
\newtheorem{cor}[theorem]{Corollary}
\theoremstyle{remark}
\newtheorem*{rem}{Remark}
\begin{document}

\title{Heisenberg Hausdorff dimension \\ of Besicovitch sets}
\author{Laura Venieri}
\date{}

\maketitle

\begin{abstract}
We consider (bounded) Besicovitch sets in the Heisenberg group and prove that $L^p$ estimates for the Kakeya maximal function imply lower bounds for their Heisenberg Hausdorff dimension.
\end{abstract}

\blfootnote{\textit{Key words.} Besicovitch set, Kakeya maximal function, Heisenberg group, Hausdorff dimension.\\
\textit{Mathematics Subject Classification}. 28A75.\\
The author is supported by the Academy of Finland.}

\section{Introduction}

In this paper we investigate \textit{Besicovitch sets} (also called Kakeya sets) in the context of the Heisenberg group, examining in particular their Heisenberg Hausdorff dimension. The Kakeya sets contain a unit line segment in every direction and have Lebesgue measure zero. Their name comes from a question that Kakeya posed in 1917: what is the smallest area in which a unit line segment can be rotated of 180 degrees in the plane? Besicovitch proved that this can be done in arbitrarily small area.

In recent years much research has been done about their (Euclidean) Hausdorff dimension. The \textit{Kakeya conjecture} states that every Besicovitch set in $\mathbb{R}^n$ must have Hausdorff dimension $n$. Davies \cite{Davies1971} proved that this holds in $\mathbb{R}^2$ but it is still an open problem in $\mathbb{R}^n$ for $n \ge 3$, even if these sets have been studied extensively because of their close connection to central questions in modern Fourier analysis.

Many different methods have been used to find lower bounds for the Hausdorff dimension: the first were based on geometric observations and developed in particular by Bourgain \cite{Bourgain1991} and Wolff \cite{MR1363209}. More recently Bourgain \cite{Bourgarithm} introduced an arithmetic combinatorics method obtaining the lower bound $\frac{13}{25}n + \frac{12}{25}$, then improved by Katz and Tao \cite{Katz&Tao1} to $\frac{6}{11}n+\frac{5}{11}$. Later Katz and Tao \cite{Katz&Tao2} developed the method further and got $(2 - \sqrt{2}) (n-4)$, which is the best estimate known at the moment for $n \ge 5$.

A natural approach to the problem is via a related maximal function, the \textit{Kakeya maximal function} $f^*_\delta$ (with width $\delta >0$). This is defined for $f \in L^1_{loc }(\mathbb{R}^{n})$ as
\begin{eqnarray*}
\begin{split}
&f^*_\delta: S^{n-1} \rightarrow [0, \infty],\\
f^*_\delta(e)= \sup_{a \in \mathbb{R}^{n}} & \frac{1}{\mathcal{L}^{n}(T^\delta_e(a))} \int_{T^\delta_e(a)} |f| d \mathcal{L}^{n},
\end{split}
\end{eqnarray*}
where $S^{n-1}$ is the unit sphere and $T^\delta_e(a)$ denotes the tube with center $a$, direction $e$, length 1 and radius $\delta$:
\begin{equation*}
T^\delta_e(a)= \{ p \in \mathbb{R}^{n}: |\langle p-a, e \rangle | \le 1/2, |p-a-\langle p-a, e \rangle e| \le \delta \}.
\end{equation*} The \textit{Kakeya maximal conjecture} states that the following inequality
\begin{equation}\label{Kakeyamaxfunction}
||f^*_\delta||_{L^n(S^{n-1})} \le C_{n,\epsilon} \delta^{- \epsilon} ||f||_{L^n(\mathbb{R}^n)}
\end{equation}
holds for all $\epsilon>0$, $0 < \delta <1$ and $f \in L^n(\mathbb{R}^n)$. In particular, this conjecture implies the Kakeya conjecture. 

We will show that, as in the case of the Euclidean Hausdorff dimension, estimates of type \eqref{Kakeyamaxfunction} for $L^p$ functions imply lower bounds for the Heisenberg Hausdorff dimension of Besicovitch sets. The proof we present holds only for bounded sets because of the structure of the Heisenberg group and in particular the fact that the directions of horizontal segments can get arbitrarily close to the vertical axis in unbounded sets. 

We will use the following notation. The Lebesgue measure in $\mathbb{R}^n$ is denoted by $\mathcal{L}^n$ and the $s$-dimensional (euclidean) Hausdorff measure by $\mathcal{H}^s$, $s\ge0$. We recall that this is defined for any $A \subset\mathbb{R}^n$ by
\begin{equation*}
\mathcal{H}^s(A)= \lim_{\delta \rightarrow 0}\mathcal{H}_\delta^s(A),
\end{equation*}
where for $\delta >0$
\begin{equation*}
\mathcal{H}_\delta^s(A) =\inf \left\{ \sum_i d(E_i)^s : A \subset \bigcup_i E_i, d(E_i) < \delta \right\}.
\end{equation*}
Here $d(E_i)$ denotes the diameter of $E_i$ with respect to the euclidean metric.

We denote by $\sigma^{n-1}$ the surface measure on $S^{n-1}$ and by $B_E(p,r)$ the Euclidean ball with center $p$ and radius $r$. The notation $C_a$ means that the constant $C$ depends on $a$.

The Heisenberg group $\mathbb{H}^n$ is $\mathbb{R}^{2n+1}$, where we denote the points by
\begin{equation*}
p=(x,y,t)=(x_1, \dots, x_n, y_1, \dots, y_n, t)
\end{equation*}
with $x_j$, $y_j$, $t \in  \mathbb{R}$ for every $j=1, \dots, n$, with the composition law given by
\begin{equation*}
(x,y,t) \ast (x',y',t')= (x+x', y+y', t+t'+2( \langle y, x' \rangle - \langle x, y' \rangle)),
\end{equation*}
where $\langle \cdot, \cdot \rangle$ is the usual inner product in $\mathbb{R}^n$. The inverse of $p$ is $p^{-1}=(-x,-y,-t)$.

We will work with the \textit{Heisenberg metric} on $\mathbb{H}^n$, also known as the \textit{Kor\'anyi metric}. This is the left invariant metric given by
\begin{equation*}
d_H(p,q)= ||q^{-1} \ast p ||_H,
\end{equation*}
where $|| \cdot ||_H$ is the homogeneous norm defined by
\begin{equation*}
||p||_H= ((||x||^2+||y||^2)^2 +t^2)^{\frac{1}{4}}.
\end{equation*}

Here $||\cdot||$ denotes the euclidean norm. We denote by $\tau_p$ the left translation by $p$, i.e. $\tau_p(q)= p*q$. Horizontal lines in $\mathbb{H}^n$ are either lines through the origin in the $xy$-hyperplane or left translations of them.

The Heisenberg ball centred at $p$ with radius $r$ is the set
\begin{equation*}
B_H(p,r)= \{ q \in \mathbb{H}^n : d_H(p,q) <r \}.
\end{equation*}

We will estimate the Heisenberg Hausdorff dimension, which means that we consider the $s$-dimensional Hausdorff measure $\mathcal{H}^s_H$ defined with respect to the Heisenberg metric. The Heisenberg Hausdorff dimension of a set $A \subset \mathbb{H}^n$ is then defined in the usual way as
\begin{equation*}
\dim_H A= \inf \{ s: \mathcal{H}^s_H(A)=0 \} = \sup \{ s: \mathcal{H}^s_H(A)= \infty \}.
\end{equation*}
The Heisenberg Hausdorff dimension is greater or equal to the Euclidean one; for example, the Heisenberg Hausdorff dimension of $\mathbb{H}^n$ is $2n+2$.

A \textit{Kakeya set} in $\mathbb{H}^{n}$ is a Borel set $B \subset \mathbb{H}^{n} $ having zero Lebesgue measure ($\mathcal{L}^{2n+1}(B)=0$) and containing a line segment of unit length in every direction. This means that for every $e \in S^{2n}$ there exists $b \in \mathbb{R}^{2n+1}$ such that $\{ te+b: t \in [0,1] \} \subset B$.

The \textit{Kakeya maximal function} $f^*_\delta$ of $f \in L^1_{loc }(\mathbb{R}^{2n+1})$ is defined as above.

We will prove the following result.

\begin{theorem}\label{Th1}
Let $1<p<2n+1$, $\beta>0$ such that $2n+2-\beta p >0$. If
\begin{equation}\label{kn}
||f^*_\delta||_{L^p(S^{2n})} \le C_{n,p,\beta} \delta^{-\beta} ||f||_p \quad \mbox{for all} \quad f \in L^p(\mathbb{R}^{2n+1}),
\end{equation}
then the Heisenberg Hausdorff dimension of every bounded Besicovitch set in $\mathbb{H}^n$ is at least $2n+2-\beta p$. In particular, if \eqref{kn} holds for some $p$, $1 \le p < \infty$, for all $\beta >0$, then the Heisenberg Hausdorff dimension of every bounded Besicovitch set in $\mathbb{H}^n$ is 2n+2. 
\end{theorem}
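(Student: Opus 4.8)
The plan is to argue by contradiction, following the classical route from a Kakeya maximal estimate to a Hausdorff--dimension bound, with two changes forced by the group structure: coverings are taken by Heisenberg balls, and one compares the Heisenberg $\delta$--neighbourhood of a suitably restricted family of unit segments with honest Euclidean $\delta$--tubes, which is only possible because $B$ is bounded. So fix a bounded Besicovitch set $B\subset B_E(0,R)\subset\mathbb{H}^n$ and suppose for contradiction that $\mathcal{H}^s_H(B)=0$ for some $s=2n+2-\beta p-\eta$ with small $\eta>0$; from this I will contradict \eqref{kn}. The last assertion of the theorem then follows immediately: letting $\beta\to0^+$ gives $\dim_H B\ge2n+2$, and $\dim_H\mathbb{H}^n=2n+2$ forces equality.

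The geometric input is the heart of the matter. On $B_E(0,R)$ one has $C_R^{-1}|p-q|\le d_H(p,q)\le C_R|p-q|^{1/2}$, hence $B_E(p,c_Rr^2)\subset B_H(p,r)\subset B_E(p,C_Rr)$ and $\mathcal{L}^{2n+1}(B_H(p,r))\approx r^{2n+2}$ for $p\in B_E(0,R)$, $0<r<1$. Next, for $e=(e_x,e_y,e_t)\in S^{2n}$ and $b=(b_x,b_y,b_t)\in B$ set $\phi(e,b)=e_t+2\langle e_y,b_x\rangle-2\langle e_x,b_y\rangle$; a direct computation with the group law gives $b^{-1}\ast(b+te)=(te_x,te_y,t\,\phi(e,b))$, so the Euclidean segment $\{b+te:t\in[0,1]\}$ is the left translate by $b$ of the segment $\gamma(t)=t\,(e_x,e_y,\phi(e,b))$ through the origin, and since left translations preserve $\mathcal{L}^{2n+1}$ the Heisenberg $\delta$--neighbourhood of $\{b+te:t\in A\}$ has the same Lebesgue measure as that of $\{\gamma(t):t\in A\}$, for any $A\subset[0,1]$. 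The step I expect to be the main obstacle is the following: if $e$ lies in the spherical cap $V\subset S^{2n}$ of radius $\sim1/R$ about the poles $(0,\dots,0,\pm1)$, then $|\phi(e,b)|\ge\tfrac12$ uniformly in $b\in B_E(0,R)$ --- because $|e_t|$ is close to $1$ while $|2\langle e_y,b_x\rangle-2\langle e_x,b_y\rangle|\le2R|(e_x,e_y)|$ is small, and this is exactly where boundedness is used (inside a bounded set, ``horizontal'' segments cannot run nearly vertically) --- and in that case $B_H(\gamma(t),\delta)$ is comparable to a Euclidean box centred at $\gamma(t)$ with $2n$ sides of length $\approx\delta$ and one side, in the $t$--direction, much shorter; since $\gamma$ itself runs essentially in the $t$--direction ($|\phi(e,b)|\gtrsim1\gg|(e_x,e_y)|$), sweeping this box along $\gamma$ fills space efficiently and yields, for $e\in V$ and $A\subset[0,1]$,
\begin{equation*}
\mathcal{L}^{2n+1}\!\Bigl(\bigcup_{t\in A}B_H(b+te,\delta)\Bigr)\ \gtrsim_R\ |A|\,\delta^{2n},
\end{equation*}
while this union lies in the Euclidean tube of radius $C_R\delta$ about $\{b+te:t\in[0,1]\}$. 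Note $\sigma^{2n}(V)\approx R^{-2n}>0$.

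With this in hand the multiscale pigeonholing is standard. Since $\mathcal{H}^s_H(B)=0$, cover $B$ by Heisenberg balls $B_H(p_i,r_i)$ with $r_i<\delta_0$ ($\delta_0$ prescribed, small enough that $C_R\delta_0<1$) and $\sum_ir_i^s<\varepsilon$; with $\mathcal{D}_j=\{i:2^{-j-1}<r_i\le2^{-j}\}$ this gives $m_j:=\#\mathcal{D}_j\lesssim\varepsilon\,2^{js}$. For $e\in V$ fix $I_e=\{b_e+te:t\in[0,1]\}\subset B$ and put $A_j(e)=\{t:b_e+te\in\bigcup_{i\in\mathcal{D}_j}B_H(p_i,r_i)\}$; then $\bigcup_jA_j(e)=[0,1]$, so $\sum_j|A_j(e)|\ge1$. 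Choosing weights $c_j=C_\eta\,2^{-j\eta/(2p)}$ with $\sum_jc_j\le1$, for each $e\in V$ there is a scale $j(e)$ with $|A_{j(e)}(e)|\ge c_{j(e)}$; set $V_j=\{e\in V:j(e)=j\}$, so $\bigsqcup_jV_j=V$.

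Finally, fix $j$, write $\delta=2^{-j}$ and $\widehat{U}_j=\bigcup_{i\in\mathcal{D}_j}B_H(p_i,2\delta)$. For $e\in V_j$ the set $\{b_e+te:t\in A_j(e)\}$ is covered by the balls $B_H(p_i,r_i)$, $i\in\mathcal{D}_j$, with $r_i\le\delta$, so its Heisenberg $\delta$--neighbourhood lies in $\widehat{U}_j$; by the geometric estimate it also lies in the Euclidean tube $T^{C_R\delta}_e(a_e)$ about $I_e$ (with $a_e$ the midpoint, the tube slightly lengthened) and has measure $\gtrsim_R c_j\delta^{2n}\approx_R c_j\,\mathcal{L}^{2n+1}(T^{C_R\delta}_e(a_e))$, so $(\mathbf 1_{\widehat{U}_j})^{*}_{C_R\delta}(e)\gtrsim_R c_j$. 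Applying \eqref{kn} to $f=\mathbf 1_{\widehat{U}_j}$ at width $C_R\delta$, and using $\mathcal{L}^{2n+1}(\widehat{U}_j)\lesssim m_j\delta^{2n+2}$ together with $m_j\lesssim\varepsilon\,2^{js}$, we obtain
\begin{equation*}
c_j\,\sigma^{2n}(V_j)^{1/p}\ \lesssim_R\ \delta^{-\beta}\bigl(m_j\,\delta^{2n+2}\bigr)^{1/p}\ \lesssim_R\ \bigl(\varepsilon\,\delta^{\,2n+2-\beta p-s}\bigr)^{1/p}=\bigl(\varepsilon\,\delta^{\eta}\bigr)^{1/p},
\end{equation*}
whence $\sigma^{2n}(V_j)\lesssim_R\varepsilon\,\delta^{\eta}c_j^{-p}=\varepsilon\,C_\eta^{-p}2^{-j\eta/2}$. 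Summing over $j$ gives $\sigma^{2n}(V)=\sum_j\sigma^{2n}(V_j)\lesssim_{R,\eta}\varepsilon$, impossible once $\varepsilon$ is small depending on $R,\eta$. Hence $\mathcal{H}^s_H(B)\neq0$ for every $s<2n+2-\beta p$, i.e.\ $\dim_H B\ge2n+2-\beta p$.
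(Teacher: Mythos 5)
Your overall architecture coincides with the paper's (both follow the classical scheme of Theorem 22.9 in \cite{Mattila}): restrict to a positive-measure set of directions transversal to the horizontal distribution inside $B_E(0,R)$ (your cap $V$ together with the condition $|\phi(e,b)|\ge\tfrac12$ plays exactly the role of the paper's set $S$ and of its lemma bounding the angle of horizontal segments), pigeonhole the cover over dyadic scales, and feed the characteristic function of the union of enlarged balls at a fixed scale into the maximal inequality. The bookkeeping at the end --- volumes of Heisenberg balls, the $\varepsilon\,\delta^{\eta}$ gain, the sum over $j$ --- is correct.

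The one place where you diverge from the paper is precisely the step you flag as the main obstacle, and as written it is a gap: the lower bound $\mathcal{L}^{2n+1}\bigl(\bigcup_{t\in A}B_H(b+te,\delta)\bigr)\gtrsim_R|A|\,\delta^{2n}$ is asserted on the strength of ``sweeping a flattened box along $\gamma$ fills space efficiently,'' which is a heuristic, not a proof; note that $A$ is an arbitrary measurable subset of $[0,1]$, and the short axis of $B_H(q,\delta)$ is not the $t$-axis but the normal to the tilted plane $H_q$ (tilted by up to $\arccos(1/\sqrt{1+4R^2})$, which is not small for large $R$). This is exactly the estimate to which the paper devotes Steps 1--3 of its proof (inequality \eqref{intFk}), via a foliation of the tube by segments parallel to the planes $H_j$, a containment check, a disjointness argument, and polar integration. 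Your claim is nevertheless true and admits a shorter proof than the paper's: since $(b+te)^{-1}\ast(b+t'e)=((t'-t)e_x,(t'-t)e_y,(t'-t)\phi(e,b))$, one has $d_H(b+te,b+t'e)\ge(|\phi|\,|t-t'|)^{1/2}\ge(|t-t'|/2)^{1/2}$ for $e\in V$; hence if one picks one point $t_m\in A$ from each interval of a grid of mesh $\delta^2$ meeting $A$ (so that $\#\{t_m\}\ge|A|\delta^{-2}$), the balls $B_H(b+t_me,\delta)$ have overlap bounded by an absolute constant, and therefore $\mathcal{L}^{2n+1}\bigl(\bigcup_mB_H(b+t_me,\delta)\bigr)\gtrsim\#\{t_m\}\,\delta^{2n+2}\ge|A|\,\delta^{2n}$. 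With that lemma supplied your argument closes, and is in fact a genuine simplification of the paper's proof of \eqref{intFk}; without it, the central claim is unproved.
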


We recall that the analogous result in the case of Euclidean Hausdorff dimension gives the lower bound $2n+1 - \beta p$ (\cite{Mattila}, Theorem 22.9). Note that this shows in particular that Kakeya maximal conjecture implies Kakeya conjecture.

Using then inequalities of type \eqref{kn} proved by Wolff \cite{MR1363209} and Katz and Tao \cite{Katz&Tao2} for some values of $p$ and $\beta$, we will get as a corollary some lower bounds for the Heisenberg Hausdorff dimension of bounded Besicovitch sets.

\section{Bounds derived from estimates of the maximal function}

For the proof of Theorem \ref{Th1} we will use the following lemma, which states how close to the vertical axis horizontal segments can get in a bounded set.  

\begin{lem}\label{anglen}
Any horizontal segment contained in a bounded set $ \subset B_E(0,R) \subset \mathbb{H}^n$ forms an angle with the $xy$-hyperplane which is at most $ \arccos \frac{1}{\sqrt{1+4R^2}} $.
\end{lem}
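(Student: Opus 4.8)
The plan is to parametrise an arbitrary horizontal segment in the set explicitly and then to bound its ``vertical slope'' using only the fact that one of its points lies in $B_E(0,R)$. Since every horizontal line is a left translate of a line through the origin in the $xy$-hyperplane, and since translating such a line by one of its own points gives back the same line, any horizontal line meeting the set can be written as $\tau_p(\ell)$, where $p=(x_0,y_0,t_0)$ may be taken to be a point of the segment (hence $p\in B_E(0,R)$) and $\ell=\{s(a,b,0):s\in\mathbb R\}$ with $a,b\in\mathbb R^n$; after rescaling $s$ we may assume $\|a\|^2+\|b\|^2=1$. Using the group law one computes
\[
\tau_p(s(a,b,0)) \;=\; \bigl(x_0+sa,\; y_0+sb,\; t_0+2s(\langle y_0,a\rangle-\langle x_0,b\rangle)\bigr),
\]
so the segment lies on a Euclidean line with direction vector $(a,b,2c)$, where $c:=\langle y_0,a\rangle-\langle x_0,b\rangle$.

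Next I would read off the angle $\theta$ between this direction and the $xy$-hyperplane. Since the $t$-axis is the normal to that hyperplane and $\|(a,b)\|=1$, one gets
\[
\cos\theta \;=\; \frac{\|(a,b)\|}{\|(a,b,2c)\|} \;=\; \frac{1}{\sqrt{1+4c^2}} .
\]
As $\arccos$ is decreasing on $[0,1]$, the claimed bound $\theta\le\arccos\frac{1}{\sqrt{1+4R^2}}$ is therefore equivalent to $|c|\le R$.

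The only nontrivial point is this last estimate. Since $p\in B_E(0,R)$ we have $\|x_0\|^2+\|y_0\|^2\le R^2$, and by Cauchy--Schwarz applied to the vectors $(\|y_0\|,\|x_0\|)$ and $(\|a\|,\|b\|)$ in $\mathbb R^2$,
\[
|c| \;=\; |\langle y_0,a\rangle-\langle x_0,b\rangle| \;\le\; \|y_0\|\,\|a\|+\|x_0\|\,\|b\| \;\le\; \sqrt{\|x_0\|^2+\|y_0\|^2}\,\sqrt{\|a\|^2+\|b\|^2} \;\le\; R ,
\]
which finishes the proof. I do not expect a real obstacle: the only step requiring a little care is the reduction to the normal form $\tau_p(\ell)$ with $p$ on the segment (i.e.\ checking that left translation of a line through the origin by a point of that line leaves it invariant), so that the base point of the parametrisation can legitimately be taken inside $B_E(0,R)$; everything else is a direct computation together with one application of Cauchy--Schwarz.
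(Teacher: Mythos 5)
Your proof is correct and follows essentially the same route as the paper's: parametrise the horizontal line as a left translate of a line through the origin, read off the angle from the direction vector $(a,b,2c)$, and bound $|c|\le R$ by Cauchy--Schwarz using that the translating point lies in $B_E(0,R)$. The only (minor) difference is that you explicitly justify taking the base point $p$ on the segment itself, a reduction the paper leaves implicit; otherwise the arguments coincide.
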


\begin{proof}
Let $e=(e^1,e^2,0)=(e_1, \dots, e_n, e_{n+1}, \dots, e_{2n},0) \in S^{2n}$ be a direction in the $xy$-hyperplane. A horizontal line is either a line in the $xy$-hyperplane through the origin, which has the form \begin{equation*}
\gamma(s)=(se^1,se^2,0), \qquad s \in \mathbb{R},
\end{equation*}
or a left translation of it by $p=(x,y,t) \in \mathbb{H}^n$:
\begin{equation*}
\tau_p(\gamma(s))= (se^1+x, se^2+y, t+ 2s(\langle y, e^1 \rangle - \langle x, e^2 \rangle)), \qquad s \in \mathbb{R}.
\end{equation*}

We now want to find the angle between such lines and the $xy$-hyperplane. In the first case, of course, the angle is 0.

The direction of $\tau_p(\gamma(s))$ is given by the vector $u=(e^1,e^2, 2(\langle y, e^1 \rangle - \langle x, e^2 \rangle))$, whose projection on the $xy$-hyperplane is the vector $e=(e^1,e^2,0)$. The angle between them is given by
\begin{equation*}
\theta= \arccos \frac{\langle u, e \rangle}{||u|| ||e||}= \arccos \frac{1}{\sqrt{1+4(\langle y, e^1 \rangle - \langle x, e^2 \rangle)^2}}.
\end{equation*}

Since we are interested in segments contained in $B_E(0,R)$, we consider translations by points $p=(x,y,t) \in B_E(0,R)$, that is $||(x,y,t)|| < R$. Then we try to estimate $(\langle y, e^1 \rangle - \langle x, e^2 \rangle)^2$. We can write
\begin{equation*}
(\langle y, e^1 \rangle - \langle x, e^2 \rangle)^2= |\langle (e^1, -e^2), (y,x) \rangle_{\mathbb{R}^{2n}} |^2 \le ||(e^1,-e^2)||^2_{\mathbb{R}^{2n}} ||(y,x)||^2_{\mathbb{R}^{2n}},
\end{equation*} 
where the last inequality follows from Cauchy-Schwarz inequality. Since $||(e^1,-e^2)||_{\mathbb{R}^{2n}}=1$, we have
\begin{equation}\label{aybxRn}
(\langle y, e^1 \rangle - \langle x, e^2 \rangle)^2 \le ||(y,x)||^2_{\mathbb{R}^{2n}} < R^2-t^2 \le R^2.
\end{equation}

Thus 
\begin{equation*}
\theta \le \arccos \frac{1}{\sqrt{1+4R^2}}.
\end{equation*}
\end{proof}

The proof of Theorem \ref{Th1} proceeds in the same way as in the case of Euclidean Hausdorff dimension, see Theorem 22.9 in \cite{Mattila}.

\begin{proof} (Theorem \ref{Th1})\\
Let $B \subset \mathbb{H}^n$ be a bounded Besicovitch set, $B \subset B_E(0,R)$ for some $R>0$. Let $0<\alpha < 2n+2-\beta p$ and for $j \in \mathbb{N}$, $j \ge1$, let $B_j= B_H(p_j, r_j)$ be Heisenberg balls such that $p_j \in B_E(0,R)$, $B \subset \bigcup_j B_j$, and $r_j \le 2^{-M}$, where $M=\left[\frac{\log(3/\tan \theta_R)}{\log 2} \right] +1$ and $\theta_R= \arccos \frac{1}{\sqrt{1+4(R+1)^2}} - \arccos \frac{1}{\sqrt{1+4R^2}} >0$. Here $[\quad]$ denotes the integer part.

Let S be the set of  $e \in S^{2n}$ such that the angle between $e$ and the $xy$-hyperplane is bigger than $\theta$, where $\theta =\arccos \frac{1}{\sqrt{1+4(R+1)^2}} > \arccos \frac{1}{\sqrt{1+4R^2}}$.
Then, by Lemma \ref{anglen}, S does not contain any horizontal direction.

Since $B$ is a Besicovitch set, it contains a unit segment parallel to $e$ for every $e \in S$. Denote by $I_e \subset B$ one of these segments. For $k=M+1,\dots$, let
\begin{equation*}
J_k = \{j: 2^{-k} \le r_j < 2^{1-k} \},
\end{equation*}
and
\begin{equation*}
S_k= \left\{e \in S : \mathcal{H}^1\left(I_e \cap \bigcup_{j \in J_k} B_j \right) \ge \frac{1}{2k^2} \right\}.
\end{equation*}

Then we have
\begin{equation*}
 \bigcup_{k=M}^\infty S_k =S.
\end{equation*}

Indeed, if there were some $e \in S \setminus \bigcup_k S_k$, then we would have $\mathcal{H}^1(I_e \cap \bigcup_{j \in J_k} B_j ) < \frac{1}{2k^2}$ for all  $k \ge M$. Since $\sum_{k=M}^\infty \frac{1}{2k^2} < \sum_{k=1}^\infty \frac{1}{2k^2}<1$, this would imply
\begin{equation*}
\sum_{k=M}^\infty \mathcal{H}^1\left(I_e \cap \bigcup_{j \in J_k} B_j \right) < \sum_{k=M}^\infty \frac{1}{2k^2} <1,
\end{equation*}
which is impossible because
\begin{equation*}
\sum_{k=M}^\infty \mathcal{H}^1\left(I_e \cap \bigcup_{j \in J_k} B_j \right) \ge \mathcal{H}^1 (I_e) \ge 1.
\end{equation*}

Let \begin{equation*}
F_k= \bigcup_{j \in J_k} B_H (p_j, R_j) \quad \mbox{and} \quad f= \chi_{F_k},
\end{equation*}

where $R_j=2 \left( 1+ \frac{1}{\sin \theta_R}\right) r_j$.

We claim that for $e \in S_k$
\begin{equation}\label{intFk}
\mathcal{L}^{2n+1} (T \cap F_k ) \ge C_{R,n}  \frac{1}{k^2} \mathcal{L}^{2n+1}(T), 
\end{equation}
where $T=T^{2^{-k}}_e(b)$ with $b \in \mathbb{H}_n$ that will be specified later.

We first show how this concludes the proof and then we prove \eqref{intFk}.

By \eqref{intFk}, we have that, for $e \in S_k$, $f^*_{2^{-k}}(e) \ge C_{R,n} \frac{1}{k^2}$, which implies
\begin{equation*}
||f^*_{2^{-k}}||_p \ge C_{R,n} \frac{1}{k^2} \sigma^{2n}(S_k)^{1/p}.
\end{equation*}

Observe that $||f||_p = (\mathcal{L}^{2n+1}(F_k))^{1/p} \le (C_{R,n} \# J_k 2^{(1-k)(2n+2)})^{1/p}$, where $2^{(1-k)(2n+2)}$ is essentially the volume of a Heisenberg ball of radius $2^{1-k}$. Hence by the assumption \eqref{kn} we have
\begin{equation*}
||f^*_{2^{-k}}||_p \le C_{n,p,\beta} 2^{k\beta} ||f||_p \le  C_{n,p,\beta} 2^{k\beta} (\#J_k 2^{(1-k)(2n+2)})^{1/p}.
\end{equation*}

Combining these two inequalities, we get
\begin{eqnarray*}
\begin{split}
\sigma^{2n}(S_k) &\le C_{n,p, \beta,R} k^{2p} 2^{k \beta p} 2^{-(2n+2)k} \# J_k=C_{n,p, \beta,R} k^{2p} 2^{-k(2n+2-\beta p)} \# J_k \le \\ &\le C_{n,p, \beta,R} 2^{-k \alpha} \#J_k.
\end{split}
\end{eqnarray*}

Hence it follows
\begin{equation*}
\sum_{j=1}^\infty r_j^\alpha \ge \sum_{k=M}^\infty \# J_k 2^{-k \alpha} \ge C_{n,p, \beta,R} \sum_{k=M}^\infty \sigma^{2n}(S_k) \ge C_{n,p, \beta,R} \sigma^{2n}(S),
\end{equation*}
which concludes the proof.

We now prove \eqref{intFk}, which is trivial in the euclidean case.

First we observe that if $p \in B_H(p_j,r_j) \cap I_e$, then for any other $q \in B_H(p_j,r_j)$ we have $q \in B_H(p,2r_j)$. Hence, taking $\tilde{B}_j=B_H(p_j,2r_j)$, we can assume that $p_j \in I_e$. Then we still have for $e \in S_k$
\begin{equation*}
\mathcal{H}^1\left(I_e \cap \bigcup_{j \in J_k} \tilde{B}_j \right) \ge \frac{1}{2k^2}.
\end{equation*}

We will use the following notation: $I_e$ is given by the points $\gamma(u)=( ue_1+c_1, ue_2+c_2, ue_3 + c_3)$, with $e=(e_1,e_2,e_3) \in S$, $c=(c_1,c_2,c_3) \in \mathbb{H}^n$, $u \in [0,1]$, and $p_j=\gamma(\bar{u})=(\bar{x}, \bar{y}, \bar{t})$.

By definition, given some $j$, the Heisenberg ball $B_H(p_j,2r_j)$ is the set of points $q=(x,y,t) \in \mathbb{H}^n$ such that
\begin{equation*}
d_H(p_j,q)= ||q^{-1} \ast p_j||_H= ((||\bar{x}-x||^2 + ||\bar{y}-y||^2 )^2 + (\bar{t}-t + 2 (\langle x, \bar{y} \rangle - \langle y, \bar{x} \rangle)^2)^{1/4} <2 r_j. 
\end{equation*}

If we let $H_j= \{(x,y,t) \in \mathbb{H}^n : \bar{t}-t + 2 (\langle x, \bar{y} \rangle - \langle y, \bar{x} \rangle )=0 \}$, then 
\begin{equation*}
B_H(p_j,2r_j) \cap H_j= \{ (x,y,t) \in \mathbb{H}^n: (||\bar{x}-x||^2 + ||\bar{y}-y||^2 )^{1/2} < r_j, t= \bar{t} +  2 (\langle x, \bar{y} \rangle - \langle y, \bar{x} \rangle) \}.
\end{equation*}

\textbf{Step 1:} In case $p_j=(0,0,\bar{t})$, $H_j$ is parallel to the $xy$-hyperplane. If $p_j \ne (0,0,\bar{t})$, then we need to determine what angle $H_j$ makes with the $xy$-hyperplane.

A normal vector to $H_j$ is $n_j=(-2 \bar{y},  2 \bar{x}, 1)$, whereas the unit normal vector to the $xy$-hyperplane is $n=(0,0,1)$. Hence the angle between them (which is the angle between $H_j$ and the $xy$-hyperplane) is 
\begin{equation*}
\theta_j= \arccos \frac{\langle n_j, n \rangle}{||n_j|| ||n||}= \arccos \frac{1}{\sqrt{1+4(||\bar{y}||^2+|| \bar{x}||^2})}.
\end{equation*}

We have $||\bar{y}||^2+ ||\bar{x}||^2 < R^2$ because $p_j \in B_E(0,R)$. Hence
$\theta_j \le \arccos \frac{1}{\sqrt{1+4R^2}}$.

Since, for $e \in S_k$, the angle that $I_e$ makes with the $xy$-hyperplane is $\bar{\theta} > \arccos \frac{1}{\sqrt{1 + 4 (R+1)^2}}$, we have that the angle between $H_j$ and $I_e$ is
\begin{equation*}
\bar{\theta} - \theta_j \ge \theta_R.
\end{equation*}

\textbf{Step 2:} Now we claim that if $a \in I_e$ belongs to a ball $B_H(p_j,2 r_j)$, then all the segments with direction parallel to $H_j$, one endpoint in $a$ and length $\frac{2^{-k}}{\sin \theta_R}$ are contained in the ball $B_H(p_j, R_j)$. 

Let $a=(a_1,a_2,a_3) \in B_H(p_j,2 r_j)$. This means that
\begin{equation}\label{ainBrj}
(||\bar{x} - a_1||^2+ ||\bar{y}-a_2||^2)^2 + (\bar{t}-a_3 + 2 (\langle a_1, \bar{y} \rangle - \langle a_2, \bar{x} \rangle))^2 < (2r_j)^4.
\end{equation}

A point $q$ in any segment with direction parallel to $H_j$, one endpoint in $a$ and of length $ \frac{2^{-k}}{\sin \theta_R} $ is given by $q=\sigma^j_{e'}(s)$, with
\begin{equation}\label{segment}
\sigma^j_{e'}(s)=\left( se'_1+ a_1, s e'_2 + a_2, 2s (\langle e'_1, \bar{y} \rangle - \langle e'_2, \bar{x} \rangle) + a_3 \right),
\end{equation}
where $e'=(e'_1,e'_2)$ is in the unit sphere $S_P$ contained in the hyperplane $P-c$ and $P$ is the hyperplane orthogonal to $I_e$ passing through $c$ ($P-c$ is the translate of $P$ passing through the origin), $||e'_1||^2 + ||e'_2||^2 =1 $, and $0 \le s \le \frac{2^{-k}}{\sin \theta_R}  \le \frac{ r_j}{\sin \theta_R}$.

We want to show that $q \in B_H(p_j, R_j)$. This means that
\begin{eqnarray*}
\begin{split}
&(||\bar{x}-se'_1-a_1||^2+ ||\bar{y}- s e'_2-a_2||^2)^2 + \\ &+ \left(\bar{t}- 2s (\langle e'_1, \bar{y}, \rangle - \langle e'_2, \bar{x} \rangle) - a_3 + 2 ( \langle se'_1+a_1,\bar{y} \rangle - \langle s e'_2 + a_2, \bar{x} \rangle) \right)^2 < R_j^4, 
\end{split}
\end{eqnarray*}
that is 
\begin{eqnarray}\label{qinBRj}
\begin{split}
(||\bar{x}-se'_1-a_1||^2+ ||\bar{y}- s e'_2-a_2||^2)^2 + (\bar{t} - a_3 + 2 (\langle a_1, \bar{y} \rangle - \langle a_2, \bar{x} \rangle))^2 < R_j^4.
\end{split}
\end{eqnarray}

By a direct calculation, the left-hand side equals
\begin{eqnarray*}
\begin{split}
& (||\bar{x} - a_1||^2+ ||\bar{y}-a_2||^2)^2 + (\bar{t}-a_3 + 2 (\langle a_1, \bar{y} \rangle - \langle a_2, \bar{x} \rangle))^2 + \\
&+ s^4
- 4 s^3 (\langle e'_1, \bar{x}-a_1 \rangle + \langle e'_2, \bar{y}- a_2 \rangle )+ \\
& + 4 s^2 (\langle e'_1, \bar{x}-a_1 \rangle + \langle e'_2, \bar{y}- a_2 \rangle )^2+ 2 s^2 (||\bar{x} - a_1||^2+ ||\bar{y}-a_2||^2) + \\
&- 4s (||\bar{x} - a_1||^2+ ||\bar{y}-a_2||^2)(\langle e'_1, \bar{x}-a_1 \rangle + \langle e'_2, \bar{y}- a_2 \rangle ).
\end{split}
\end{eqnarray*}

Using \eqref{ainBrj} and the inequalities
\begin{itemize}

\item $s \le \frac{r_j}{\sin \theta_R}$, 

\item $ (\langle e'_1, \bar{x}-a_1 \rangle + \langle e'_2, \bar{y}- a_2 \rangle )^2 = (\langle (e'_1, e'_2), (\bar{x}-a_1, \bar{y}-a_2 ) \rangle )^2 \le \\ \le ||(e'_1, e'_2)||^2 ||(\bar{x}-a_1, \bar{y}-a_2 )||^2 \le ||\bar{x}-a_1||^2 + ||\bar{y}- a_2||^2 < 4 r_j^2$,

\end{itemize}
we find the following upper bound for the left-hand side of \eqref{qinBRj}:
\begin{equation*}\label{lhs}
16\left(1+ \frac{4}{\sin \theta_R}+ \frac{6}{(\sin  \theta_R)^2} + \frac{4}{(\sin \theta_R)^3} + \frac{1}{(\sin \theta_R)^4} \right) r_j^4= \left( 1 + \frac{1}{\sin \theta_R} \right)^4(2r_j)^4= R_j^4.
\end{equation*} 

Hence the claim is proved.

\textbf{Step 3:} The segments $\sigma^j_{e'}(s)$ considered above are in general not parallel to each other when $p_j$ varies along $I_e$.  

Consider the segment $\sigma^j_{e'}(s)$ given by \eqref{segment}, where $\bar{x}= ue_1+c_1$ and $\bar{y}=ue_2+c_2$. Its direction is given by the vector 
\begin{eqnarray*}
\begin{split}
v(u)&=( e'_1, e'_2, 2(\langle e'_1, ue_2+c_2 \rangle - \langle e'_2, ue_1+c_1 \rangle))= \\&=( e'_1, e'_2, 2( u C_{e,e'} + C_{e',c})),
\end{split}
\end{eqnarray*}
where $C_{e,e'}= \langle e'_1,e_2 \rangle - \langle e'_2,e_1 \rangle$ and $C_{e',c}= \langle e'_1,c_2 \rangle - \langle e'_2,c_1 \rangle $.

The norm of $v(u)$ is then $||v(u)||= \sqrt{1+4(u C_{e,e'} + C_{e',c})^2}$. Let now 
\begin{eqnarray*}
f(u)= \langle \frac{v(u)}{||v(u)||}, e \rangle = \frac{\langle e'_1, e_1\rangle + \langle e'_2,e_2 \rangle + 2 e_3 ( u C_{e,e'} + C_{e',c})}{\sqrt{1+4(u C_{e,e'} + C_{e',c})^2}}.
\end{eqnarray*}

Then $\arccos f(u)$ is the angle between $I_e$ and the segment $\sigma^j_{e'}(s)$. Observe that if $C_{e,e'}= \langle e'_1,e_2 \rangle - \langle e'_2,e_1 \rangle=0$ then $f$ is constant, hence the segments $\sigma^j_{e'}(s)$ are parallel as $p_j$ moves along $I_e$ (this happens for example if $e_1=e_2=0$, i.e. $I_e$ is vertical).

The derivative of $f$ is given by
\begin{eqnarray*}
\frac{d}{du} f(u)= \frac{2 C_{e,e'} (e_3-2C_{e',c} (\langle e'_1, e_1\rangle + \langle e'_2,e_2 \rangle) - 2u C_{e,e'} (\langle e'_1, e_1\rangle + \langle e'_2,e_2 \rangle))}{(1+4(u C_{e,e'} + C_{e',c})^2)^{3/2}}.
\end{eqnarray*}

If $\langle e'_1, e_1\rangle + \langle e'_2,e_2 \rangle=0$, then the sign of $\frac{d}{du}f$ is constant (same sign as $C_{e,e'}$), hence $f$ is monotone. If $\langle e'_1, e_1\rangle + \langle e'_2,e_2 \rangle \ne 0$ and $C_{e,e'}\ne 0$, let
\begin{equation}
C_{e,e',c}= \frac{e_3-2C_{e',c} (\langle e'_1, e_1\rangle + \langle e'_2,e_2 \rangle)}{2C_{e,e'} (\langle e'_1, e_1\rangle + \langle e'_2,e_2 \rangle)}.
\end{equation}

If $C_{e,e',c} <0$ or $C_{e,e',c} >1$, then $f$ is monotone on $[0,1]$. If $0 \le  C_{e,e',c} \le 1$, then if $f$ is increasing (respectively decreasing) for $u \in [0, C_{e,e',c})$, it is decreasing (respectively increasing) for $u \in ( C_{e,e',c},1]$.

In case $f$ is monotone for all $u \in [0,1]$, let $I_{e,e'}=I_e$. Otherwise, let $I^1_e= \gamma([0,C_{e,e',c}])$ and $I^2_e= \gamma([C_{e,e',c},1])$. Since
\begin{equation*}
\mathcal{H}^1\left(I_e \cap \bigcup_{j \in J_k} \tilde{B}_j \right) \ge \frac{1}{2k^2},
\end{equation*}
there exists $i \in \{1,2\}$ such that 
\begin{equation*}
\mathcal{H}^1\left(I_e^{i} \cap \bigcup_{j \in J_k} \tilde{B}_j \right) \ge \frac{1}{4k^2}.
\end{equation*}
Let $I_{e,e'}=I_e^{i}$. 

Since $f$ is monotone on $I_{e,e'}$, either the segments $\sigma^j_{e'}(s)$ or the segments $\sigma^j_{-e'}(s)$ do not intersect when $p_j \in I_{e,e'}$. Hence there is a subset $S_P'$ of $S_P$ with $\sigma^{2n-1}(S_P') = \sigma^{2n-1}(S_P)/2$ and for every $e' \in S_P'$ there exists $I_{e,e'}$ such that the segments $\sigma^j_{e'}(s)$ do not intersect for $p_j \in I_{e,e'}$ and
\begin{equation*}
\mathcal{H}^1\left(I_{e,e'} \cap \bigcup_{j \in J_k} \tilde{B}_j \right) \ge \frac{1}{4k^2}.
\end{equation*}

Let $I_1= \gamma([0,1/3])$, $I_2= \gamma([1/3,2/3])$ and $I_3=\gamma([2/3,1])$. Then there exists $i_{e'} \in \{1,2,3\}$ such that
\begin{equation}\label{Ii}
\mathcal{H}^1\left((I_{e,e'}\cap I_{i_{e'}}) \cap \bigcup_{j \in J_k} \tilde{B}_j \right) \ge  \frac{1}{12k^2}.
\end{equation}

Let $S_1=\{ e' \in S_P' : i_{e'}=1\}$, $S_2=\{ e' \in S_P' : i_{e'}=2\}$ and $S_3=\{ e' \in S_P' : i_{e'}=3\}$. Then there exists $l \in \{1,2,3\}$ such that $\sigma^{2n-1}(S_l) \ge \frac{\sigma^{2n-1}(S_P')}{3}= \frac{\sigma^{2n-1}(S_P)}{6}$.

If $l=2$ then we let $T=T^{2^{-k}}_e(\gamma(1/2))$. In case $l=1$ or $3$, we take $T=T^{2^{-k}}_e(b)$, where $b$ is the mid-point of $I_1$ or $I_3$ respectively. 

For any $e' \in S_l$, consider the 2-dimensional plane $P'$ orthogonal to $P$ and containing $I_e$ and the segment joining $c$ and $e'+c$. The line containing $I_e$ divides $P'$ into two half-planes. Let $P''$ be the one that contains the segment joining $c$ and $e'+c$. Note that for $k \ge M$ (as we assumed) we have $\frac{2^{-k}}{\tan \theta_R} < \frac{1}{3} $. Thus for $p_j \in I_l$ the segments $\sigma_{e'}^j(s)$ intersect any segment $I$ parallel to $I_e$ contained in $T \cap P''$.

Moreover, by step 2 all the segments $\sigma^j_{e'}(s)$ are contained in $F_k$ and by step 3 they do not intersect for $p_j \in I_{e,e'}$. Hence by \eqref{Ii} if $I$ is any segment parallel to $I_e$, contained in $P'' \cap T$, we have
\begin{equation}\label{10}
\mathcal{H}^1\left(I \cap F_k \right) \ge C_{R,n} \frac{1}{k^2}.
\end{equation} 

Hence $\mathcal{L}^{2}(P''\cap T \cap F_k) \ge C_{R,n} \frac{1}{k^2}\mathcal{L}^{2}(P'' \cap T)$. This holds for all such half-planes $P''$ containing the segment from $c$ to $e'+c$ with $e' \in S_l$. To get \eqref{intFk}, we will first integrate over 2-dimensional planes $P''$ and then over $S_l$. 

If $f$ is a Lebesgue measurable function in $\mathbb{R}^N$, then by polar integration in translates of the $x_1, \dots , x_{N-1}$-hyperplane along the $x_N$-axis we obtain
\begin{equation*}
\int_{\mathbb{R}^N} f(x) dx = \int_{-\infty}^\infty \left( \int_0^\infty r^{N-2} \int_{S^{N-2}} f(ry , x_N ) d \sigma^{N-2}(y) dr \right) dx_N ,
\end{equation*}
where $y \in S^{N-2}$ and we have written $x=(x',x_N) \in \mathbb{R}^N$ with $ x'\in \mathbb{R}^{N-1}$. Changing the order of integration, we get
\begin{equation*}
\int_{\mathbb{R}^N} f(x) dx =\int_{S^{N-2}} \left( \int_0^\infty r^{N-2} \int_{-\infty}^\infty  f(ry , x_N ) d x_N dr \right) d \sigma^{N-2}(y),
\end{equation*}
where we first integrate over 2-dimensional planes containing the $x_N$-axis and then over the unit sphere in the $x_1, \dots , x_{N-1}$-hyperplane. 

If we apply this formula to the case when $N=2n+1$, the $x_N$-axis is the line containing $I_e$ and the $x_1, \dots , x_{N-1}$-hyperplane is $P-c$, then \eqref{intFk} follows from \eqref{10}.

\end{proof}

We recall Wolff's result, which in case of a Besicovitch set in $\mathbb{R}^{2n+1}$ gives the lower bound $\frac{2n+3}{2}$ for its Euclidean Hausdorff dimension.

\begin{theorem}
Let $0 < \delta <1$. Then for $f \in L^{\frac{2n+3}{2}}(\mathbb{R}^{2n+1})$,
\begin{equation}\label{Wolff}
||f^*_\delta ||_{L^{\frac{2n+3}{2}}(S^{2n})} \le C_{n,\epsilon} \delta^{\frac{1-2n}{3+2n}- \epsilon} ||f||_{L^{\frac{2n+3}{2}}(\mathbb{R}^{2n+1})}
\end{equation}
for every $\epsilon >0$.
\end{theorem}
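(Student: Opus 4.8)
The statement is Wolff's Kakeya maximal function estimate \cite{MR1363209}, specialized to the ambient dimension $N=2n+1$ and the exponent $p=(N+2)/2=(2n+3)/2$; at this exponent the sharp loss is $\delta^{-(N-2)/(N+2)-\epsilon}$, which is exactly $\delta^{(1-2n)/(3+2n)-\epsilon}$. The plan is therefore simply to quote \cite{MR1363209}; for completeness I indicate the strategy, without reproducing the (rather delicate) argument. The first reduction is standard (see Chapters 22--23 of \cite{Mattila}): after linearizing $f^{*}_{\delta}$, dualizing, and discretizing directions, \eqref{Wolff} reduces, up to a factor $\delta^{-\epsilon}$, to a purely geometric assertion about a family $\{T_{j}\}$ of $\delta$-tubes contained in $B_{E}(0,2)\subset\mathbb{R}^{N}$ and pointing in a maximal $\delta$-separated set of directions (so $\#\{T_{j}\}\sim\delta^{-(N-1)}$ and $\mathcal{L}^{N}(T_{j})\sim\delta^{N-1}$): one must show that
\begin{equation*}
\mathcal{L}^{N}\Big(\bigcup_{j}T_{j}\Big)\gtrsim_{\epsilon}\delta^{(N-2)/2+\epsilon},
\end{equation*}
equivalently, after pigeonholing the counting function $\sum_{j}\chi_{T_{j}}$ to be $\sim\lambda$ on a set $E$ with $\mathcal{L}^{N}(E)\sim\lambda^{-1}$, that $\lambda\lesssim_{\epsilon}\delta^{-(N-2)/2-\epsilon}$.

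The core is the \emph{hairbrush} argument. In Bourgain's earlier \emph{bush} method one picks a point lying in $\sim\lambda$ tubes; since the directions are $\delta$-separated these tubes are essentially disjoint at distance $\sim 1$ from the point, so their union has measure $\gtrsim\lambda\delta^{N-1}$, and combined with $\mathcal{L}^{N}(E)\sim\lambda^{-1}$ this yields only $\lambda\lesssim\delta^{-(N-1)/2}$, i.e.\ the exponent $(N+1)/2$. Wolff's idea is to root the configuration at a one-dimensional object instead: by an incidence count (Cauchy--Schwarz applied to $\int(\sum_{j}\chi_{T_{j}})^{2}$) one finds a tube $T_{0}$ met by $\gtrsim\lambda$ of the remaining tubes, and one studies the \emph{hairbrush} $\mathcal{H}$ consisting of those tubes, split into subfamilies $\mathcal{H}_{k}$ according to the angle $\sim 2^{-k}$ each makes with $T_{0}$. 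A tube of $\mathcal{H}_{k}$ meets $T_{0}$ along an interval of length $\sim 2^{k}\delta$, so the tubes of $\mathcal{H}_{k}$ spread over $\sim 2^{-k}\delta^{-1}$ disjoint segments of $T_{0}$; exploiting this extra one-dimensional spreading is exactly what upgrades the maximal-function exponent from $(N+1)/2$ to $(N+2)/2$.

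The technical heart, and the step I expect to be the main obstacle, is the \emph{hairbrush lemma}: a quantitative estimate showing that the tubes of $\mathcal{H}_{k}$, being direction-separated \emph{and} all forced to pass through the fixed tube $T_{0}$ at a common angle, cannot overlap nearly as much as a generic $\delta$-separated family could, so that $\bigcup_{T\in\mathcal{H}_{k}}T$ has a correspondingly large volume. Proving this requires a careful analysis, in a slab transverse to $T_{0}$, of how a pair of direction-separated $\delta$-tubes through $T_{0}$ can intersect; the elementary input is that two $\delta$-tubes whose directions differ by an angle $\theta$ meet in a set of diameter $\lesssim\delta/\theta$, and one combines this with a combinatorial count that is essentially a lower-dimensional Kakeya estimate. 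With the hairbrush lemma in hand I would sum over the $O(\log(1/\delta))$ dyadic angle scales $2^{-k}$, use $\bigcup_{j}T_{j}\subset B_{E}(0,2)$ to bound the total volume from above by an absolute constant, optimize in $\lambda$, and arrive at $\lambda\lesssim_{\epsilon}\delta^{-(N-2)/2-\epsilon}$; unwinding the duality of the first step then gives \eqref{Wolff}, the $\delta^{-\epsilon}$ absorbing all the logarithmic losses from the pigeonholing. The full details are in \cite{MR1363209}.
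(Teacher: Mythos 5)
The paper states this theorem as a known result of Wolff and gives no proof, simply citing \cite{MR1363209}; your proposal does the same, and your identification of the exponent ($p=(N+2)/2$ with loss $\delta^{-(N-2)/(N+2)-\epsilon}$ for $N=2n+1$) and your sketch of the bush-versus-hairbrush argument are accurate. So your approach matches the paper's, which is to quote Wolff rather than reprove the estimate.
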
 

Later Katz and Tao improved this result for $2n+1 \ge 9$, proving the following. 

\begin{theorem}
Let $0 < \delta <1$. Then for $f \in L^{\frac{8n+7}{7}}(\mathbb{R}^{2n+1})$,
\begin{equation}\label{KT}
||f^*_\delta ||_{L^{\frac{8n+7}{7}}(S^{2n})} \le C_{n,\epsilon} \delta^{-\frac{6n}{8n+7}- \epsilon} ||f||_{L^{\frac{8n+7}{7}}(\mathbb{R}^{2n+1})}
\end{equation}
for every $\epsilon >0$.
\end{theorem}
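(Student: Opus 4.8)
This estimate is not proved here: \eqref{KT} is the Kakeya maximal function bound of Katz and Tao \cite{Katz&Tao2} at the exponent $p=\frac{8n+7}{7}$ --- in the ambient dimension $d=2n+1$ this is their estimate at $p=\frac{4d+3}{7}$ with loss $\delta^{-\frac{3(d-1)}{4d+3}-\epsilon}$ --- and in our paper it is only quoted, so the plan is simply to cite it. Were one to reprove it, the plan would be as follows. First I would pass, by duality together with a pigeonholing on the size of the superlevel sets of $f^*_\delta$, to the equivalent combinatorial statement: for $\delta$-tubes $T_1,\dots,T_N$ pointing in a maximal $\delta$-separated set of directions in $S^{2n}$ (so $N\sim\delta^{-2n}$), the union $\bigcup_i T_i$ cannot be too small --- quantitatively, $|\bigcup_i T_i|$ must exceed a certain power of $\delta$, the aim being to beat what Wolff's estimate \eqref{Wolff} already gives. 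Wolff's estimate is itself the \emph{hairbrush} argument: fix one tube, sort the tubes meeting it by the $2$-plane through the fixed tube that essentially contains them, and run a C\'ordoba-type $L^2$ bound in each such plane.

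Next I would bring in the arithmetic-combinatorial idea responsible for the improvement. Slicing the configuration by a generic pencil of parallel hyperplanes $\{x_d=t_\ell\}$ and recording the $\delta$-discretized slices $A_{t_1},A_{t_2},A_{t_3}$, a line meeting three of them forces a containment $A_{t_2}\subset\lambda A_{t_1}+(1-\lambda)A_{t_3}$ with $\lambda=\frac{t_3-t_2}{t_3-t_1}$; if the union of tubes were small, all three slices would have small $\delta$-covering number, yet these weighted-sumset relations would persist for essentially every dilation ratio $\lambda$. The analytic core, which I would import from \cite{Katz&Tao2}, is a sum--difference (equivalently, $x$-ray) inequality ensuring that a set caught in such sumsets of small sets for many ratios cannot itself be small; combined with Pl\"unnecke--Ruzsa type estimates for the cardinalities of $A_{t_1},A_{t_2},A_{t_3}$, an induction on scales, and the bush/hairbrush input, optimisation over the parameters returns the exponent $\frac{8n+7}{7}$, which exceeds Wolff's $\frac{2n+3}{2}$ precisely when $2n+1\ge9$. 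Finally I would undo the first reduction, converting the combinatorial bound back into the inequality \eqref{KT}.

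The step I expect to be the main obstacle is the arithmetic one: obtaining quantitative control, uniform over the whole family of relevant dilation parameters, of the sizes of sums and differences of finite sets carrying only partial additive structure. This is considerably more delicate than the geometric hairbrush estimate and is the principal content of \cite{Katz&Tao2}; by contrast the discretization, the pigeonholing that isolates a good triple of slices, and the return to the maximal function are routine. Since for our purposes \eqref{KT} --- like \eqref{Wolff} --- is needed only as a black box to feed into Theorem \ref{Th1}, we simply take this route.
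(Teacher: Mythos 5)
Your proposal matches the paper exactly: this theorem is quoted from Katz and Tao \cite{Katz&Tao2} as a black box (the substitution $d=2n+1$ into their exponents $p=\frac{4d+3}{7}$ and loss $\delta^{-\frac{3(d-1)}{4d+3}-\epsilon}$ is correct), and no proof is given here. Your sketch of their sum--difference argument is a reasonable summary but is not needed for the paper's purposes.
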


These results give the following estimates for the Heisenberg Hausdorff dimension of bounded Besicovitch sets.

\begin{cor}
Every bounded Besicovitch set in $\mathbb{H}^n$ has Heisenberg Hausdorff dimension at least $\frac{2n+5}{2}$ for $n \le 3$ and $\frac{8n+14}{7}$ for $n \ge 4$.
\end{cor}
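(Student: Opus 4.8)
The plan is to feed the two maximal-function estimates stated above into Theorem \ref{Th1} and optimize over $\epsilon$; there is essentially nothing to prove beyond the bookkeeping of substituting the right exponents and checking the hypotheses.

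First I would use Wolff's inequality \eqref{Wolff}, which is of the form \eqref{kn} with $p=\frac{2n+3}{2}$ and $\beta=\beta(\epsilon):=\frac{2n-1}{2n+3}+\epsilon$ for each $\epsilon>0$ (note that $-\beta(\epsilon)$ equals the exponent $\frac{1-2n}{3+2n}-\epsilon$ appearing in \eqref{Wolff}). One checks immediately that $1<\frac{2n+3}{2}<2n+1$ for every $n\ge1$, and that $2n+2-\beta(\epsilon)p>0$ once $\epsilon$ is small enough (for $\epsilon=0$ the value is $\frac{2n+5}{2}>0$, so this survives all sufficiently small $\epsilon$). Theorem \ref{Th1} then gives that every bounded Besicovitch set in $\mathbb{H}^n$ has Heisenberg Hausdorff dimension at least
\[
2n+2-\beta(\epsilon)\,\frac{2n+3}{2}=\frac{2n+5}{2}-\frac{2n+3}{2}\,\epsilon .
\]
Letting $\epsilon\to0$ yields the lower bound $\frac{2n+5}{2}$, valid for all $n\ge1$ and in particular for $n\le3$.

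For $n\ge4$, so that $2n+1\ge9$ and \eqref{KT} is available, I would instead plug the Katz--Tao inequality into Theorem \ref{Th1}: here $p=\frac{8n+7}{7}$ and $\beta=\frac{6n}{8n+7}+\epsilon$, and again $1<\frac{8n+7}{7}<2n+1$ while $2n+2-\beta p>0$ for small $\epsilon$. This produces the lower bound
\[
2n+2-\Bigl(\tfrac{6n}{8n+7}+\epsilon\Bigr)\frac{8n+7}{7}=\frac{8n+14}{7}-\frac{8n+7}{7}\,\epsilon ,
\]
hence $\frac{8n+14}{7}$ after letting $\epsilon\to0$. A one-line comparison, $2(8n+14)-7(2n+5)=2n-7>0$ exactly when $n\ge4$, shows that this improves on $\frac{2n+5}{2}$ precisely in the range where \eqref{KT} applies, which gives the stated dichotomy.

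I do not expect any real obstacle here: the entire argument is routine arithmetic on top of Theorem \ref{Th1}. The only point deserving a little care is that \eqref{Wolff} and \eqref{KT} come with an $\epsilon$-loss, so one must invoke Theorem \ref{Th1} separately for each $\epsilon>0$ and then take the supremum of the resulting lower bounds (equivalently, pass to the limit $\epsilon\to0$); verifying that the admissibility condition $2n+2-\beta p>0$ holds for all sufficiently small $\epsilon$ is immediate from the computed values $\frac{2n+5}{2}$ and $\frac{8n+14}{7}$.
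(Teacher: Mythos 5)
Your proposal is correct and follows exactly the paper's own argument: substitute Wolff's estimate ($p=\frac{2n+3}{2}$, $\beta=\frac{2n-1}{2n+3}+\epsilon$) and the Katz--Tao estimate ($p=\frac{8n+7}{7}$, $\beta=\frac{6n}{8n+7}+\epsilon$) into Theorem \ref{Th1} and let $\epsilon\to0$. The only difference is that you spell out the hypothesis checks and the comparison $2(8n+14)-7(2n+5)=2n-7$ explaining the split at $n=4$, which the paper leaves implicit.
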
 

\begin{proof}
The estimate \eqref{Wolff} corresponds to \eqref{kn} with $p=\frac{2n+3}{2}$ and $\beta = \frac{2n-1}{2n+3}+ \epsilon$. Hence it gives the lower bound $2n+2-\beta p= \frac{2n+5}{2}- \epsilon \frac{2n+3}{2}$. Taking the limit when $\epsilon \rightarrow 0$, we get the claim.   

Similarly, from \eqref{KT} we get the lower bound $\frac{8n+14}{7}$.
\end{proof}

\begin{rem}
Closely related to Kakeya sets are Nikodym sets. A \textit{Nikodym set} $E \subset \mathbb{R}^n$ is a Borel set such that $\mathcal{L}^n(E)=0$ and for every $x \in \mathbb{R}^n$ there is a line $L$ through $x$ such that $E \cap L$ contains a unit line segment. The Nikodym conjecture states that Nikodym sets have full Hausdorff dimension and it is implied by Kakeya conjecture.

The \textit{Nikodym maximal function} $f^{**}_\delta$ of $f \in L^1_{loc}(\mathbb{R}^n)$ is defined as
\begin{equation*}
f^{**}_\delta(x)=\sup_{x \in T} \frac{1}{\mathcal{L}^n(T)} \int_T |f| d \mathcal{L}^n, \quad x \in \mathbb{R}^n,
\end{equation*}
where the supremum is taken over all tubes $T=T^\delta_e(a)$ that contain $x$. The Nikodym maximal conjecture asserts that the following inequality 
\begin{equation*}
||f^{**}_\delta||_{L^n(\mathbb{R}^n)} \le C_{n,\epsilon} \delta^{- \epsilon} ||f||_{L^n(\mathbb{R}^n)}
\end{equation*}
holds for all $\epsilon>0$, $0 < \delta <1$ and $f \in L^n(\mathbb{R}^n)$. In \cite{Tao} Tao proved that Nikodym maximal conjecture and Kakeya maximal conjecture are equivalent.

The proof of Theorem \ref{Th1} can be easily modified (by taking the unit ball instead of the unit sphere) to treat the case of the Nikodym maximal function. The theorem still holds if we replace $f^*_\delta$ by $f^{**}_\delta$ and Besicovitch sets by Nikodym sets. In particular, Nikodym maximal conjecture implies Nikodym conjecture and the same results obtained above hold also for Nikodym sets.
\end{rem}

\textit{Acknowledgements.} The author would like to thank Pertti Mattila for constant support and helpful comments.

\bibliography{Hei_Hausdorff}{}
\bibliographystyle{plain}

Department of Mathematics and Statistics, P.O. Box 68 (Gustaf H\"allstr\"omin katu 2b), FI-00014 University of Helsinki, Finland\\
E-mail: laura.venieri@helsinki.fi

\end{document}